\documentclass[11pt,a4paper]{article}
\usepackage{geometry} 
\usepackage{algorithmicx}
\usepackage{amsmath,amssymb}
\usepackage{mathrsfs}
\usepackage{tabularx}
\usepackage{textcomp}
\usepackage{gensymb}
\usepackage{comment}
\usepackage{bbold} 
\usepackage[metapost]{mfpic}
\usepackage{pgfplots}
\usepackage{pgfplotstable}
\pgfplotsset{compat=newest}
\usepackage{graphicx}
\usepackage{tikz}
\usetikzlibrary{arrows.meta, bending, positioning,calc}
\usepackage{eso-pic}
\usetikzlibrary{tikzmark,fit,calc}
\usepackage{caption}
\usepackage{pstricks,pst-plot}
\usepackage{pstricks-add,pst-func} 
\usepackage{listings}
\usepackage[toc,page]{appendix}
\usepackage{xcolor,pict2e,curve2e}
\usepackage{nicematrix}
\usepackage{pst-node}
\usepackage{xcolor}
\usepackage{setspace}
\usepackage{colortbl}
\usepackage{femtikz}
\usepackage[symbol]{footmisc}

\newtheorem{thm}{Theorem}
\newtheorem{prop}{Proposition}
\newtheorem{lem}{Lemma}
\newtheorem{rem}{Remark}

\newcommand\ddfrac[2]{\frac{\displaystyle #1}{\displaystyle #2}}

\newcommand{\ba}{\mathbf a}
\newcommand{\bb}{\mathbf b}

\newcommand{\bfn}{\mathbf n}

\newcommand{\bp}{\mathbf p}

\newcommand{\bx}{\mathbf x}

\newcommand{\by}{\mathbf y}

\newcommand{\bfnu}{\boldsymbol\nu}

\def\cN{\mathcal N}

\def\cP{\mathcal P}
\def\cT{\mathcal T}
\def\cE{\mathscr E}

\def\omnu{\omega_{\bfnu}}
\def\cI{\mathcal I}
\def\cJ{\mathcal J}

\title{A quasi-interpolation method for finite element spaces}
\author{Ohannes  Karakashian\thanks{Department of Mathematics, The University of Tennessee, Knoxville, TN 37996 (okarakas@utk.edu).}}
\date{\today}

\begin{document}

\maketitle

\abstract{
A combination of Taylor polynomial approximation and averaging results in a 
powerful quasi-interpolation technique that can be used to analyze the approximation 
properties of finite element spaces.
With this particular approach, the notion of affine equivalent families is completely bypassed
in the sense that the degrees of freedom are defined locally rather than being exported
from a reference element. In particular, domains with curved boundaries can be handled
in a natural manner. Finally, the ease of its application to a particular finite element
makes it a valuable pedagogical tool.
}

\section{Introduction}\label{sec1}
The approximation of functions by piecewise polynomials is a core problem in 
general approximation theory and is an issue of central importance in
finite elements. The influential textbooks of P. Ciarlet \cite{Ciarlet}
and Brenner and Scott \cite{BS2008} offer a treatment which can be considered as comprehensive
in their exposition of some of the main mathematical tools that are available.
In essence, the approach hinges on deriving approximation results of polynomial preserving operators
and it has become a common practice to refer to this approach as the Bramble-Hilbert Lemma.
The central result of all this is a theorem of Deny-Lions \cite{DL1953} which states that over the quotient
space $H^{k+1}(\Omega) /P_k(\Omega)$ the semi-norm $|\cdot|_{k+1,\Omega}$ is equivalent to
the quotient norm. (See Theorem 3.1.1. of \cite{Ciarlet} for a generalization to the spaces $W^{m,p}(\Omega)$).
One important aspect of a further development of this approach is to use a scaling argument between
a ``master'' element and the local elements to obtain bounds of the errors in terms of the discretization
parameter. This in turn requires that a particular finite element be imbedded in an affine family. 

There are several shortcomings associated with this method. First, in the case of affine Lagrange
elements, to mention one such case, it is not possible to obtain a bound of the form 
$\|u-\chi\|_{L^2(\Omega)} \le c h|u|_{1,\Omega}$, given that for afffine polynomials the quotient
norm is $|\cdot|_2 $. Indeed, bounds for affine elements depending only the $|\cdot|_1$  seminrom
play an essential role in a posteriori error estimation. Indeed, special interpolation procedures have been
developed to address this particular issue. We mention the Cl\'{e}ment interpolant \cite{Clement75}
and the Scott-Zhang interpolant \cite{SZ90}. 

Another issue that arises is the fact that certain finite elements, e.g. those involving normal derivatives
as degrees of freedom, do not form an  affine equivalent family, hence special treatments must be applied
to derive bounds for the approximation errors. Ciarlet introduces the concept of {\it Almost-affine} 
families of finite elements and applies it to the Argyris element. The required technicalities are by
no means trivial, requiring 3 full pages (cf. \cite{Ciarlet}, pp.  337-339). Also, it dos not appear
that finding an associated almost-affine family is always straightforward.

In this paper we outline an approach to approximation that offers wide applicability to many types of finite elements
that are used in practice. It is also characterized by the simplicity of its application to a given particular element. This is 
due mainly to three basic tools.that are well known having been used in a variety of contexts: Cell approximation by
Taylor polynomials, a trace estimate and Oswald averaging. Therefore the novelty resides in the combination rather
than the development of the basic tools.

\begin{enumerate}
\item The underlying procedure for approximating a function $u$ is that of the Taylor
polynomial defined locally on each cell of the mesh.
This is feasible in view of the density of smooth functions in Sobolev spaces. We thus construct
a space $V_h$ of discontinuous piecewise polynomial functions having optimal approximation properties in a variety of norms.
An important contribution in this vein is the paper of Dupont and Scott where they construct an averaged
Taylor polynomial \cite{DS80}. The paper of Duran \cite{Duran} has estimates for an averaged Taylor polynomial
but using the Hardy-Littlewood maximal function in the proof. 
We also mention the paper of Ciarlet-Wagschal \cite{CW71} on multipoint Taylor approximations.
The proof that we present provides an estimate of the remainder of the
Taylor approximation directly and does not involve the averaging process used in the two reference cited above.
It is an interesting fact that while the Taylor polynomial can be defined at every point of the cell,
the remainder, on the other hand, may not be bound optimally at every point due to a phenomenon of concentration.
In the paper \cite{BJK90} a vector Taylor polynomial  was constructed as an
approximation for locally solenoidal vector fields. Let us say that one advantage of using Taylor's
Theorem is that it applies to general domains, read individual cells of the mesh, including those 
with curved boundaries.

\item The global finite element space $V_h$, eventually used in the actual computations, is obtained by {\em stitching}
the spaces of local element-wise functions. This is standard in finite element theory and practice and relies in a 
fundamental way on the ability of the local degrees of freedom to form a compatible global network.
The process that is employed next takes averages of the local degrees of freedom to construct an element of $\chi \in V_h$. .
This is an important idea of  P. Oswald \cite{oswald93} and the process is referred to as Oswald interpolation. 
Similar averaging techniques were used by S. Brenner also in the context of preconditioning \cite{brennerxx}.
The averaging process was also used in \cite{KP03} as a means to a posteriori error estimation.

\item The difference between the Taylor polynomials and the function $\chi$ that is obtained by averaging
can be expressed as jumps of the local Taylor polynomials on the edges or faces 
that emanate from vertices of cells. To the best of our knowledge
the paper \cite{KP03} was the first to give a constructive proof for Lagrangian elements.
The final step is to use well known trace estimates to bound these jumps in terms of the difference
$u-v_h$ in each cell, closing the circle so to speak.
\end{enumerate}

It is proper to discuss our results in the context of the paper of  A. Ern and J.-L. Guermond  \cite{EG2017}. 
They set up a powerful panoply of techniques and they construct a local interpolation operator 
$\cI_K^\sharp$ and prove $L^1$ stability and optimal approximation results.
They also construct a quasi-interpolation operator 
$\cJ_h^{av}$ which is a combination of $\cI_K^\sharp$ and Oswald type averaging 
and prove local and global approximation results.
Our approach is closer to the latter but there are marked differences.
First, we use Taylor's Theorem to generate a discontinuous polynomial  approximations $v_h$ to the
unknown function $u$ and then apply averaging to construct the finite element approximations.
Two aspects we mention: First, Taylor's Theorem allows approximations on domains with curved
boundaries. It is worth mentioning that global convexity of the is not required.

Another significant contrast between other references, including \cite{EG2017}, and the present work is that we 
completely avoid the use of reference or so-called {\em master} elements. 
To be more precise, we do not assume that the local degrees of freedom are affine exports from the reference
finite element. While the focus is not at present on the interpolant, avenues to possibilities hitherto unknown
may be opened, say to meshes consisting of a mix of cells of different geometries.  

\vskip 6pt

The paper is organized as follows: A modicum of notation, assuming that the reader is familiar with the theory
of Sobolev spaces and the masterful presentation of finite elements contained in Ciarlet's book \cite{Ciarlet}.
A proof of Taylor's Theorem as an approximation device is presented.
It follows a different approach from \cite{DS80} by estimating the remainder and makes
essential use of the Hardy-Littlewood maximal function theorem.  Next, in Theorem 2, a trace estimate for
functions in $H^1(D)$ is presented with a correct balance of the $L^2$ norm and the $H^1$ seminorm in the
interior. for the proof we refer to \cite{KJ98}. Both results hold under the assumption that the cells of the mesh are starlike Lipschitz domains. 
Proposition 1 shows a bound on the sizes of the canonical basis functions on cells. It appears to be new.
Proposition 2 is the cornerstone of the entire approach. Using the Oswald averaging process, a
function in the space $\cP_m(\cT_h)$ of discontinuous piecewise polynomial functions can be approximated
by a function $\chi$ in the global finite element space $V_h$. A version specific for Lagrangian elements
had appeared in \cite{KP03} and is essentially the same as Lemma 4.2 of \cite{EG2017}. 
These being all the necessary technical steps of our approach, we apply them to the three examples 
of the Courant triangle, The Argyris triangle and the Hsieh-Clough-Tocher macro-element. 
In particular, the HCT element is an interesting case of a non-polynomial finite element.
The purpose here is to illustrate the efficacy of the method. We could also extol the pedagogical merits
of its simplicity with this paper containing all the necessary tools that are required. 
Furthermore, some of the proof, while available elsewhere, do not ppear to be widely known
e.g. the development of the trace estimate.

\section{Preliminaries}\label{sec2} 

We shall operate in the context of Hilbertian Sobolev
spaces deferring  the development of estimates in the general $W^{m,p}$ spaces to future work.
The notation we adopt is standard in Sobolev space theory with $\|\cdot\|_K$ denoting the Euclidean
norm on $K$ and $|\cdot|_{l,K}$ denoting the seminorm in $H^l(K)$. We shall also
use the widely adopted notation of {\it broken} Sobolev spaces $H(\cT_h)$.
\vskip 6pt

{\sc Definition}
A Lipschitz domain $D \subseteq R^n$ is said to be {\it starlike} if there is a point $\bp$ in its interior such that the line segment
$[\bp,\bx]$ is contained in $D$ for any $\bx \in D$.
\vskip 3pt

We make the following assumptions
\begin{enumerate}
\item[(A1)] the set $S \subseteq D$ of points with respect to which $D$ is starlike has
positive $n$-dimensional measure. 
\item[(A2)] with the unit outward normal $\bfnu$ to $D$ existing almost everywhere on $\partial D$,
there exist a constant $c$ and a point $\bp \in S$ such that
\[
(\bx-\bp)\cdot \bfnu \ge c \,  {\rm diam}(D) \quad \mbox{ for almost all } \bx \in \partial D.
\]
\end{enumerate}

We have in mind sets $D$ that are cells in a typical partition of a domain $\Omega$ such as are used
in the finite element method. The assumptions that have been made are clearly satisfied for such sets.
Moreover, for such sets, assumption (A2) implies that they are {\it shape-regular} as understood in terms of 
The diameters of the inscribed and circumscribed circles being commensurate.
Consequently, ``thin'' sets are excluded from the present discussion.
\vskip 3pt

The following result shows that functions in $H^m(D)$ can be approximated by a Taylor polynomial.
The proof which is adapted from \cite{BJK90} is more direct than the development of the averaged
Taylor polynomial of Dupont and Scott \cite{DS80}.
\vskip 6pt

\begin{thm}\label{Taylor}  Let $u \in H^{k+1}(D), k\ge 0$ with a starlike and shape-regular set $D$.
There exists a polynomial $\chi$ of total degree $k$ such that
\begin{equation}\label{Taylor0}
|u-\chi|_{l,D} \le c(k+1) h^{k+1-l} |u|_{k+1,D}, \quad l=0,\dots,k.
\end{equation}
The constant $c$ depends only on $n,D$ and the ratio $\mu(D)/\mu(S)$.

{\bf Proof}
It suffices to consider $u$  in $C^\infty(\overline D)$ given its
density in $H^{k+1}(D)$ under the prevailing assumptions. 
let $\chi = T_{\bp}^k[u]=\sum_{|\alpha|\le k} \ddfrac{\partial^\alpha u(\bp)}{\alpha !}(\bx-\bp)^\alpha$ 
for $\bp \in S$ be the Taylor polynomial in $\bx$ of total degree $k$. 
We will show below that a point $\bp\in S$ exists for which the error estimates hold.
For the remainder term $e = u-T_{\bp}^k[u]$, we have
\begin{equation}\label{Taylor1}
e(\bx) = (k+1) \sum_{|\alpha|=k+1} \ddfrac{1}{\alpha!} \int_0^1 (1-t)^k
 \partial^\alpha u\big(\bp+t(\bx-\bp)\big)(\bx-\bp)^\alpha dt.
\end{equation}
Using the Schwarz inequality and Fubini's Theorem
\begin{equation}\label{Taylor2}
\begin{split}
\| e\|_D^2 & = (k+1)^2 \int_D d \bx \bigg( \int_0^1 (1-t)^k \sum_{|\alpha|=k+1} \ddfrac{1}{\alpha!}
     \partial^\alpha u\big(\bp+t(\bx-\bp)\big)(\bx-\bp)^\alpha dt \bigg)^2 \\
 & \le (k+1)^2 \int_D d \bx  \int_0^1 \bigg| \sum_{|\alpha|=k+1} \ddfrac{1}{\alpha!}
     \partial^\alpha u\big(\bp+t(\bx-\bp)\big)(\bx-\bp)^\alpha \bigg|^2 dt \\
     & \le c(k+1) h^{2(k+1)} \int_D d \bx \int_0^1 \sum_{|\alpha|=k+1} |\partial^\alpha u\big(\bp+t(\bx-\bp)\big)|^2  dt \\
     & = c(k+1) h^{2(k+1)} \int_0^1 dt \int_D \sum_{|\alpha|=k+1} |\partial^\alpha u\big(\bp+t(\bx-\bp)\big)|^2 d \bx.
     \end{split}
 \end{equation}
 Let $R(\bx) = \sum_{|\alpha|=k+1}|\partial^\alpha u(\bx)|^2$ 
 and $D_t = \{\by: \by = \bp+t(\bx-\bp), \bx \in D, 0 \le t \le 1\}$. A change of variables gives
 \begin{equation}\label{Taylor3}
 \int_0^1 dt \int_D \sum_{|\alpha|=k+1}|\partial^\alpha u\big(\bp+t(\bx-\bp)\big)|^2 d\bx = 
 \int_0^1 t^{-n} dt \int_{D_t} R(\bx) d \bx.
 \end{equation}
 We will show that there exists a choice of $\bp$ which will allow control of the term $t^{-n}$.
 
 We extend $R(\bx)$ by zero outside of $D$ and denote the extension also by $R$, an element of
 $L^1(R^n)$. The Hardy-Littlewood Maximal function of $R(\bx)$ is defined as
 \begin{equation}\label{Taylor4}
 M(R)(\bx) = \sup_{r>0} \ddfrac{1}{\mu(B(\bx,r))} \int_{B(\bx,r)} |R(\by)| d \by
 \end{equation}
  $B(\bx,r)$ being the ball of radius $r$ centered at $\bx$ and $\mu$ is Lebesgue measure.
  The following inequality is well-known (cf. \cite{Stein})
  \begin{equation}\label{Taylor5}
  \mu\{\bx: M(R)(\bx) > \alpha \} < \ddfrac{5^n}{\alpha} \int_{R^n} |R(\by)| d\by
  = \ddfrac{5^n}{\alpha} \int_D R(\by)d \by, \quad \forall \alpha >0.
  \end{equation}
  Choosing $\alpha = 2 \ddfrac{5^n \int_D R(\by) d\by}{\mu(S)}$,
  it follows that
  \begin{equation}\label{Taylor6}
  \mu\{ \bx: M(R)(\bx) > \alpha\} < \ddfrac{\mu(S)}{2}.
  \end{equation}
  Consequently, there exists $\bp \in S$ such that 
  \[
  M(R)(\bp) \le \alpha = 2 \ddfrac{5^n \int_D R(\by)d\by}{\mu(S)}.
  \]
  In particular, using such a $\bp$ for the $T_\bp^k[u]$ yields
  \begin{equation}\label{Taylor7}
  \ddfrac{1}{\mu(B(\bp,r))} \int_{B(\bp,r)} R(\bx)d\bx \le 2 \ddfrac{5^n \int_D R(\bx) d\bx}{\mu(S)}, \quad
  \forall r >0.
  \end{equation}
  For $t \in [0,1]$ let $r_t$ be the radius of the smallest ball that contains $D_t$. Observe that $r_t = tr_1$ and
  that $\mu(B(\bp,r_t)) = t^n \mu(B(\bp,r_1)$. Hence, \eqref{Taylor7} implies that
  \[
  \int_{D_t} R(\bx)d\bx \le \int_{B(\bp,r_t)} R(\bx)d\bx \le 2 \cdot5^n t^n 
     \ddfrac{\mu(B(\bp,r_1))}{\mu(D)} \ddfrac{\mu(D)}{\mu(S)} \int_D R(\bx) d\bx.
 \]
 Since $r_1 \le diam(D)$ and $D$ is shape regular, $\mu(B(\bp,r_1))/\mu(D)$ is bounded by a
  constant that depends only on $n$. It then follows from the last inequality that
  \begin{equation}\label{Taylor8}
  \int_{D_t} R(\bx) d\bx \le c t^n \int_D R(\bx)d\bx = c t^n |u|_{k+1,D}^2,
  \end{equation}
  with the constant $c$ depending only on $n$ and the ratio $\mu(D)/\mu(S)$. Using \eqref{Taylor8},
  it follows from \eqref{Taylor2} and \eqref{Taylor3} that \eqref{Taylor0} holds with $l=0$. This also
  takes care of the case $k=0$.
  
  Now consider a multi index $\beta$ with $1\le |\beta| \le k$. It is a simple exercise to 
  show that  $\partial^\beta T_\bp^k[u] = T_\bp^{k-|\beta|}[\partial^\beta u]$. We have
  \[
  \begin{split}
 \partial^\beta e(\bx) & = \partial^\beta\big(u-T_\bp^k[u]\big) = \partial^\beta u - T_\bp^{k-|\beta|}[\partial^\beta u] \\
  & = (k+1-|\beta|) \! \! \! \sum_{|\alpha|=k+1-|\beta|} \ddfrac{1}{\alpha!} 
  \int_0^1 (1-t)^{k-|\beta|} \partial^{\alpha+\beta} u\big(\bp+t(\bx-\bp)\big)(\bx-\bp)^\alpha dt.
  \end{split}
  \]
  The steps leading to \eqref{Taylor3} result in
 \[
 \|\partial^\beta e \|_{D}^2  \le c h^{2(k+1-|\beta|)} \int_0^1 t^{-n} dt \int_{D_t} \sum_{|\alpha|=k+1-|\beta|}
 |\partial^{\alpha+\beta} u(\bx)|^2 d\bx .
 \]
 The set of indices
   $\{ \alpha+\beta,  |\alpha |=k+1-|\beta|\}$ is a subset of the set $\{\alpha,  |\alpha|=k+1\}$ implying the inequality
 $\sum_{|\alpha|=k+1-|\beta|} |\partial^{\alpha+\beta} u(\bx)|^2 \le R(\bx)$. Hence 
 \[
 \|\partial^\beta e \|_{D}^2  \le c (k+1)h^{2(k+1-|\beta|)} \int_0^1 t^{-n} dt \int_{D_t} R(\bx) d\bx .
 \] 
 We now use \eqref{Taylor8} to complete the proof. \hfill $\blacksquare$
\end{thm}
\vskip 6pt

\begin{rem}
\begin{enumerate}
\item[(i)] It is worth noting that it is the same Taylor polynomial $T_\bp^k[u]$ that is the objet of
estimate \eqref{Taylor0}
\item[(ii)] The estimate \eqref{Taylor0} with $l=k=0$ is basically a Friedrichs type inequality
with the constant Taylor polynomial in lieu of the average $\ddfrac{1}{\mu(D)} \int_D u d\bx$.
\end{enumerate}
\end{rem}

We know that there exists a well-defined trace operator $tr:H^1(D) \to L^2(\partial D)$ 
for Lipschitz domains $D$.
Of interest to us is a dimensionally ``correct''  bound on the norm $|u|_{L^2(\partial D)}$ in terms
of $\|u\|_D$ and $|u|_{1,D}$. While it does not adapt well to
{\it thin} sets, it is effective in its intended application to typical cells.
A proof that includes bounds for $|\cdot|_{L^p(\partial D}$ can be found in \cite{KJ98}. 

\begin{thm}{(Trace Inequality)}
Suppose $D$ is starlike and that assumptions (A1), (A2) are satisfied. Then
\begin{equation}\label{Trace0}
|u|_{L^2(\partial D)}^2 \le C_{tr}^{-1} \left( 2n h_D^{-1} \|u\|_{D}^2 + \ddfrac{h_D}{2} |u|_{1,D}^2\right).
\end{equation}
where $C_{tr}$ is the constant in (A2) and $h_D = {\rm diam}(D)$. 

{\bf Proof}
It suffices to consider $u\in C^\infty(\overline D)$ given that such functions are dense in $H^1(D)$ with
well defined traces in $L^2(\partial D)$. For one
\begin{equation}\label{Trace1}
\int_{\partial D} (\bx-\bp)\cdot \bfnu u^2 ds \ge C_{tr} h_D \int_{\partial D} u^2 ds 
  = C_{tr} h_D |u|^2 _{L^2(\partial D)}.
  \end{equation}
  On the other hand, the Fundamental Theorem of Calculus says that
  \begin{equation}\label{Trace2}
  \begin{split}
  \int_{\partial D} (\bx-\bp)\cdot \bfnu \, u^2 ds & = \sum_{i=1}^n \int_{\partial D} (x_i-p_i) \nu_i u^2 ds =
    \sum_{i=1}^n \int_D \ddfrac{\partial}{\partial x_i} \big( (x_i-p_i)u^2\big) d\bx \\
    & = \sum_{i=1}^n \int_D u^2 d\bx + 2 \sum_{i=1}^n \int_D (x_i-p_i) u \ddfrac{\partial u}{\partial x_i} d \bx.
    \end{split}
    \end{equation}
    The first term on the right side is $n\|u\|_{D}^2$. For the second term we have $|x_i-p_i| \le h_D$
    and use Cauchy-Schwarz to get
    \[
    \begin{split}
    \sum_{i=1}^n \int_D |x_i-p_i| u \ddfrac{\partial u}{\partial x_i} dx 
        & \le h_D \|u\|_{D} \sum_{i=1}^n \left\| \ddfrac{\partial u}{\partial x_i}\right\|_{L^2(D)} \\
        & \le \sqrt{n} \, h_D \|u\|_{D} |u|_{1,D}.
        \end{split}
        \]
        
        Combining this with \eqref{Trace2} shows that
        \begin{equation}\label{Trace3}
        \int_{\partial D} (\bx-\bp)\cdot \bfnu \, u^2 ds \le 2n \|u\|_{D}^2 + h_D ^2 |u|_{1,D}^2.
        \end{equation}
        The trace inequality \eqref{Trace0} is now a consequence of \eqref{Trace1} and \eqref{Trace3}.
        \hfill $\blacksquare$
\end{thm}
\vskip 6pt

The next result is about the approximation of a collection of real numbers by their average.
The main fact that emerges is the observation that the error can be bounded by consecutive differences
of the numbers.

\begin{lem}\label{lem1} Given $N$ real numbers $\{ \alpha_1,\dots,\alpha_N \}$
let $\beta = \frac{1}{N} \sum_{j=1}^N \alpha_j$. Then,
\begin{equation}
\sum_{j=1}^N | \alpha_j - \beta |^2 \le
   C \sum_{j=1}^{N-1} | \alpha_{j+1} - \alpha_j |^2, \label{lem1.1}
\end{equation}
where $C$ depends only on $N$.

{\bf Proof}
For any $j\in\{1,\dots,N\}$, the Cauchy--Schwarz inequality gives
\[
| \alpha_j - \beta |^2 = \frac{1}{N^2}
 \bigg| \sum_{i=1}^N (\alpha_j -\alpha_i) \bigg|^2
 \le \frac{N-1}{N^2} \sum_{i=1,j\ne i}^N | \alpha_j - \alpha_i |^2.
\]
With $j$ still fixed and $i\ne j$, we can always write $\alpha_j-\alpha_i$ as a sum of the form 
$\sum_{\ell \in L}(\alpha_{\ell+1}-\alpha_\ell)$ for some index set $L$. It follows from the above and
using the a.g.m.i. that

\begin{equation}\label{lem1.2}
 | \alpha_j - \beta |^2 \le
   C \sum_{\ell=1}^{N-1} | \alpha_{\ell+1} - \alpha_\ell |^2, \quad j=1,\dots,N,
\end{equation}
where $C$ depends only on $N$.
Inequality \eqref{lem1.1} is now a simple consequence of \eqref{lem1.2}.
\hfill $\blacksquare$
\end{lem}
\vskip 3pt

In both theory and computation of finite elements, use is made of cell-basis functions.
Estimating the sizes (norms) of these functions is of use and the issue is the subject of
our next result.
\vskip 3pt

 Let $(K, \Sigma_K, \cP_K)$ be a finite element. By that we mean a shape regular closed cell $K \in \cT_h$, 
 a finite dimensional space of functions $\cP_K$  together with a uni-solvent set
$\Sigma_K $ of linear functionals on $\cP_K$,
which are commonly referred to as degrees of freedom, (d.o.f.'s). 
We shall consider d.o.f.s having two attributes: Being attached to {\it nodes} e.g. vertices,
midpoints of edges, edges or $K$ itself and {\it types} e.g. values of functions, directional derivatives, integrals etc.

Let $\{\ba_{1,k},\dots, \ba_{\mu,k}\}$ be the set of distinct nodes of $K$.  
The set of d.o.f.'s associated with node $\ba_{i,k}$ will be denoted by 
$\{ \ell_{i,k}^j, \, j=1,\dots,\mu_i\}$. Here the superscript $j$ refers to a particular type of
d.o.f. that is asociated with $\ba_{i,k}$. Restricting ourselves to directional derivatives at nodes, we define

\[
\ell_{i,k}^j(v)= D^{s(j)} v(\ba_{i,k}) \big( \xi_1^j,\dots,\xi^j_{s(j)}\big),
\quad i=1,\dots,\mu, \ j=1,\dots, \mu_i.
\]
Here $s(j)$ is the order of derivative present and  $\xi$'s are unit vectors in $R^n$.

The canonical basis functions on $K$, $\{\phi_{i,k}^j, \ i=1,\dots,\mu, \ j=1,\dots,\mu_i\}$ for $\cP_K$
have support $K$ and are uniquely defined by
\[
\ell_{i,k}^j(\phi_{i',k}^{j'}) = \left\{ \begin{array}{lll} 1 \quad & i=i' \ {\rm and } \ j=j' \\
 0 & \quad {\rm otherwise}. \end{array} \right.
 \]

The {\it peripheral} cells $K$, i.e. those for which $\partial K \cap \partial \Omega$ has positive $(n-1)$-dimensional
measure, are allowed to have one curved face.
We assume that such cells are convex and contain an affine cell which is used to
define the nodes. For instance, they could be affine images of a reference cell $\hat K$.
On the other hand the d.o.f.'s are defined locally and are not exported from any
reference element. See e.g. Figs.  2 or 4 for further clarification.

\begin{prop}\label{prop1} Let $(K,\Sigma_K,\cP_K)$ be a finite element as described above. 
The following bounds hold for the canonical basis functions
\begin{equation}\label{prop1.1}
\| \phi_{i,k}^j \|_K \le c h_K ^ {n/2+s(j)}.
\end{equation}
where $c$ is a constant independent of the diameter $h_K$ of $K$.
\end{prop}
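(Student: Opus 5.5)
We prove the bound by a scaling argument built directly on the cell $K$, without appealing to a reference element for the degrees of freedom. Fix a node-centered (or barycentric) point $\bx_K\in K$ and consider the dilation $F(\hat\bx)=\bx_K+h_K\hat\bx$, which maps a set $\hat K$ of unit diameter onto $K$. Set $\hat\phi=\phi_{i,k}^j\circ F$ and pull everything back. The space $\hat\cP=\{p\circ F: p\in\cP_K\}$ is again finite dimensional; when $\cP_K$ consists of polynomials of degree $\le m$ it is simply $P_m$. Since $F$ is a pure dilation plus translation, the unit vectors $\xi$ in the directional-derivative degrees of freedom are unchanged. The chain rule then gives
\[
D^{s}v(\ba)(\xi_1,\dots,\xi_s)=h_K^{-s}\,D^{s}\hat v(\hat\ba)(\xi_1,\dots,\xi_s),\qquad \hat\ba=F^{-1}(\ba).
\]
So if we define $\hat\ell_{i}^{j}(\hat v)=D^{s(j)}\hat v(\hat\ba_i)(\xi^j)$, then $\hat\ell_i^{j'}(\hat\phi)=h_K^{s(j)}\delta_{ii'}\delta_{jj'}$ when $\phi=\phi_{i',k}^{j'}$. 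Hence $\hat\phi=h_K^{s(j)}\hat\psi_i^j$, where $\hat\psi_i^j$ is the canonical basis function on $\hat K$ for the functionals $\hat\ell$. Changing variables gives $\|\phi_{i,k}^j\|_K=h_K^{n/2}\|\hat\phi\|_{\hat K}=h_K^{n/2+s(j)}\|\hat\psi_i^j\|_{\hat K}$, and it remains to bound $\|\hat\psi_i^j\|_{\hat K}$ uniformly.

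This uniform bound is the main obstacle. The scaled configuration (the cell $\hat K$, the nodes $\hat\ba_i$ and the directions $\xi$) is not a fixed reference configuration: it varies from cell to cell, so a single constant is not immediate. We handle this by compactness.

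1. By shape regularity (the condition (A2) together with the inscribed ball), $\hat K$ contains a ball of radius $\rho>0$ and lies in a ball of radius $1$ around the origin, with $\rho$ independent of $K$.
2. For polynomial $\cP_K$, write $\hat\psi=\sum_\gamma c_\gamma \hat\bx^\gamma$. The coefficients solve a generalized Vandermonde system $A c=e$, where the entries of $A$ are continuous functions of the parameters $(\hat\ba_i,\xi^j)$.
3. These parameters range over a compact set: the nodes lie in the closed unit ball, and the admissible node configurations are those determined by the affine sub-cell contained in the curved peripheral cell. Shape regularity keeps the vertices uniformly non-degenerate, which is the property we assume for that set.
4. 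Unisolvence gives $\det A\ne0$ on this set. By continuity and compactness, $\|A^{-1}\|$ is uniformly bounded, so $|c|\le C$.
5. Therefore $\|\hat\psi\|_{\hat K}\le C\,\mu(B(0,1))^{1/2}\max_{|\hat\bx|\le1}|\hat\bx^\gamma|\le c$.

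For curved peripheral cells, the extra curved face only changes the integration domain, and this is still controlled because $\hat K$ lies in the unit ball. For non-polynomial spaces such as the HCT macro-element, we would apply the same argument piecewise on the subtriangles.

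The key point is unisolvence being uniform across the scaled configurations. This is genuinely an assumption, namely that the nodes and directions do not degenerate, and it is implicit in shape regularity. If needed, we would state it explicitly as uniform invertibility of $A$.
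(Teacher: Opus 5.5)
Your proof is correct and is essentially the paper's argument. The paper also scales by a pure magnification rather than an affine map, but phrases the finite-dimensional step as the $h_K$-independent equivalence of the weighted d.o.f.\ norm $\|v\|_\star=\sum_{i,j}h_K^{s(j)}|\ell_{i,k}^j(v)|$ with $\|v\|_{L^\infty(K)}$ on $\cP_K$, combined with $\|\phi_{i,k}^j\|_\star=h_K^{s(j)}$ and $\|v\|_K\le {\rm vol}(K)^{1/2}\|v\|_{L^\infty(K)}$, whereas you pull back $\phi_{i,k}^j$ directly and bound the scaled basis function. Your explicit compactness step supplies the uniformity over cell shapes that the paper's one-line ``scaling argument'' leaves implicit; it uses uniform non-degeneracy of the nodes and directions, and for HCT of the interior point $\ba_K$. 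For HCT, note that the linear system must include the $C^1$ stitching conditions, since the d.o.f.'s do not determine each cubic piece on its own subtriangle.
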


{\bf Proof} For $v\in \cP_K$ we define the quantity
$
\|v\|_\star = \sum_{i=1}^\mu\sum_{j=1}^{\mu_i} h_K^{s(j)} \big|\ell_{i,k}^j(v) \big|.
$
Essentially, this is a dimensionally consistent weighted sum of all the d.o.f.'s.
It is a norm in view of the uni-solvence of $\Sigma_K$ and is therefore equivalent to $ \|v\|_{L^\infty(K)}$ on $\cP_K$.
The constants in the equivalence relation may depend on dim($\cP_K$); however, a scaling argument 
\footnote[2]{we use a simple magnification and not an affine map}
shows that they are independent of $h_K$. By definition, we also have
\begin{equation}\label{prop1.2}
 \|\phi_{i,k}^j\|_\star = h_K^{s(j)}.
 \end{equation}
 
 Finally, \eqref{prop1.1} follows from \eqref{prop1.2}, the inequality 
 $\|v\|_K \le {\rm vol}(K)^{1/2} \|v\|_{L^\infty(K)}$
 and the equivalence of the aforementioned norms. \hfill $\blacksquare$
 \vskip 3pt
 
 \begin{rem}\label{rem2}
 Bounds on different norms of the basis functions are easily derived. Also, the results should remain true for d.o.f.'s
 other than directional derivatives including weighted volume- or edge/surface- integrals
 provided appropriate powers of $h_K$ are associated with them. 
 \end{rem}
 \vskip 3pt

Let $\overline \Omega = \cap_{K \in \cT_h} K$ be a partition of $\Omega$ which is {\it locally quasi-uniform},
that is, the sizes of adjacent cells are commensurate.
Let also $\cN$ denote the collection of all nodes.
For $\bfnu \in \cN$, $\omega_{\bfnu}$ will be the set of all pairs $(i,k)$ such that 
$\ba_{i,k}$ is node $i$ of cell $K$ and is equal to $\bfnu$.
In particular, the collection of $k$'s in $\omnu$  represent the cells that contain $\bfnu$.
 Its cardinality $|\omega_{\bfnu}|$ equals 1 if $\bfnu$ is in the interior of a cell or it is in the
 interior of an edge that is adjacent to the boundary of $\Omega$.
 
The global finite element space $V_h$ is the span of the functions $\phi_{\bfnu}^j:= \sum_{\ba_{i,k}=\bfnu} \phi_{i,k}^j$ 
having as support the patch of cells containing $\bfnu$, in other words, the union of the cells in $\omnu$.
Note that $V_h$ is a subspace of $\cP_K(\cT_h)$. Moreover, the local degrees of freedom are designed
from the onset to endow $\phi_{\bfnu}^j$, and consequently $V_h$, with a certain global degree of smoothness, 
e.g. $C^m, m\ge 0$ or some other type of regularity such as continuity at the midpoints of edges as is the case 
with the Crouzeix-Raviart element.
\vskip 3pt

We shall next establish the result which is the cornerstone of our approach to approximation:
A discontinuous piecewise polynomial function $v_h$ can be approximated by an element 
$\chi$ in the global space $V_h$ such that various norms of the difference $v_h-\chi$ can be bounded
purely by the jumps of directional derivatives of $v_h$ across edges of the mesh $\cT_h$.
We note that the same result, but specific to Lagrangian elements, was included in \cite{KP03}.
   
 \begin{prop}\label{prop2} Suppose we have the inclusion $ \cP_m(K) \subseteq \cP_K $ for some nonnegative $m$.
 Given $v_h \in \cP_m(\cT_h)$,  there exists $\chi \in V_h$ such that for $K \in \cT_h$,
 \begin{equation}\label{prop2.1}
 |v_h - \chi|_{l,K}^2\le c \sum_{j \in \cJ} \sum_{e \in \cE_K} h_e^{1+2s(j)-2l} |[D^{s(j)}v_h]|_e^2,\quad 0 \le l \le m,
 \end{equation}
 where $\cE_K$ denotes all the edges in $\cT_h$ that contain a vertex of $K$ \hskip -3pt
\footnote[3]{except the boundary edges, shown dashed in Figures 1, 2, 3.}
 and $\cJ$ is a set indicating the orders of directional derivatives.
 \end{prop}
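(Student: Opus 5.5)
Define $\chi$ by Oswald averaging of the local degrees of freedom. Since $\cP_m(K)\subseteq \cP_K$, on each cell $K$ we can write
\[
v_h|_K=\sum_{i=1}^\mu\sum_{j=1}^{\mu_i}\ell_{i,k}^j(v_h|_K)\,\phi_{i,k}^j .
\]
For each node $\bfnu\in\cN$ and each type $j$, set
\[
\beta_\bfnu^j=\frac{1}{|\omnu|}\sum_{(i,k)\in\omnu}\ell_{i,k}^j(v_h|_K),\qquad
\chi=\sum_{\bfnu\in\cN}\sum_j \beta_\bfnu^j\,\phi_\bfnu^j\in V_h .
\]
For boundary nodes we average only over the cells sharing the node and do not impose boundary values; this is why the boundary edges are excluded from $\cE_K$. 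Then on $K$
\[
v_h-\chi=\sum_{i,j}\big(\ell_{i,k}^j(v_h|_K)-\beta^j_{\ba_{i,k}}\big)\phi_{i,k}^j .
\]
By the triangle inequality and the fact that the number of nodes and d.o.f.'s per cell is bounded,
\[
|v_h-\chi|_{l,K}^2\le c\sum_{i,j}\big|\ell_{i,k}^j(v_h|_K)-\beta^j_{\ba_{i,k}}\big|^2\,|\phi_{i,k}^j|_{l,K}^2 .
\]
Proposition \ref{prop1} together with Remark \ref{rem2} and an inverse-type scaling (the same magnification argument) gives $|\phi_{i,k}^j|_{l,K}\le c\,h_K^{n/2+s(j)-l}$.

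Next we bound the coefficient differences. Fix a node $\bfnu=\ba_{i,k}$ and a type $j$. The numbers $\alpha_{k'}=\ell_{i',k'}^j(v_h|_{K'})$, $(i',k')\in\omnu$, are the values at $\bfnu$ of the same directional derivative $D^{s(j)}v_h(\bfnu)(\xi^j)$ taken from the different cells around $\bfnu$. This requires that the directions $\xi^j$ be chosen consistently per node, which holds by the design of the global element. We order the cells of the patch so that consecutive cells $K_r,K_{r+1}$ share an edge or face $e$ through $\bfnu$; local quasi-uniformity and shape regularity bound $|\omnu|$. Lemma \ref{lem1} then gives
\[
|\alpha_{k}-\beta^j_\bfnu|^2\le C\sum_r |\alpha_{r+1}-\alpha_r|^2 ,
\]
and each difference $\alpha_{r+1}-\alpha_r$ is the value at $\bfnu$ of the jump $[D^{s(j)}v_h(\xi^j)]$ across $e$. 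That jump is a polynomial of degree at most $m$ on $e$, so an inverse estimate (equivalence of norms on a finite-dimensional space, scaled to $e$) yields
\[
|[D^{s(j)}v_h](\bfnu)|^2\le c\,h_e^{-(n-1)}\,|[D^{s(j)}v_h]|_e^2 .
\]

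Finally we combine the pieces. Each term carries the factor $h_K^{n+2s(j)-2l}\cdot h_e^{1-n}$, and local quasi-uniformity makes $h_K\sim h_e$, so the factor is $\sim h_e^{1+2s(j)-2l}$, as claimed in \eqref{prop2.1}. Summing over the nodes of $K$ gathers exactly the edges in $\cE_K$, since these are the interior edges emanating from the vertices (nodes) of $K$; nodes interior to edges or cells are handled the same way or have $|\omnu|=1$ and contribute nothing.

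The main obstacle is the step of expressing coefficient differences as jumps. The d.o.f. of the same type at a node must be the same functional from every adjacent cell, for instance a normal derivative at an edge midpoint, with the normal chosen once per edge. Higher-order derivatives must be handled as full tensors $D^{s}v_h(\bfnu)$, whose jumps across each face control every directional component. A further difficulty is peripheral cells with a curved face, where the inverse estimates require that the constants remain uniform. To cover these I would first try to control the full derivative tensor jump $[D^{s(j)}v_h](\bfnu)$ rather than single directions.
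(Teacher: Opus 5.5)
Your proposal is correct and follows the paper's proof essentially step by step. Like the paper, you expand $v_h-\chi$ on $K$ in the canonical basis with the scaling of Proposition~\ref{prop1}, define $\chi$ by Oswald averaging of the nodal degrees of freedom, bound coefficient differences by consecutive differences along an edge-connected ordering of $\omnu$ via Lemma~\ref{lem1}, recognize these as point values of jumps across shared edges, and finish with an $L^\infty$--$L^2$ inverse inequality on $e$. The details you make explicit are used only implicitly in the paper, so they sharpen rather than change the argument: the inverse estimate passing from $\|\phi_{i,k}^j\|_K$ to $|\phi_{i,k}^j|_{l,K}$, the same nodal functional in every cell of the patch, boundedness of $|\omnu|$, and $h_K\sim h_e$.
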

 {\bf Proof} 
 We can certainly write 
 $v_h - \chi\big|_K = \sum _{i=1}^\mu \sum_{j=1}^{\mu_i}  \big( \ell_{i,k}^j(v_h) - \ell_{i,k}^j(\chi)\big)\phi_{i,K}^j$.
 which implies via  \eqref{prop1.1}
 \begin{equation}\label{prop2.2}
 |v_h-\chi|_{l,K}^2 \le c h_K^{n+2s(j)-2l } \sum _{i=1}^\mu \sum_{j=1}^{\mu_i} \big| \ell_{i,k}^j(v_h)-\ell_{i,k}^j(\chi)\big|^2.
 \end{equation}
 
 We assume that the list $\omnu$ is such that if $(i,k)$ and $(i',k')$ are consecutive pairs,
 then cells $k$ and $k'$ share an edge.
 Some observations are in order. The list may be periodic in the sense that the first and last cells
 are adjacent. This would be the case e.g. for a vertex. Also, this consecutivity is possible in two
 space dimensions but not so in higher dimensions as a counterexample shows. In that case some repetitions
 must be incorporated the effect of which will only be to increase the constants in the estimates.
 With that in mind, the value of $\chi$ is defined by averaging as follows: At each node $\bfnu=\ba_{i,k} \in \cN$
 we set
 \begin{equation}\label{prop2.3}
 \ell_{i,k}^j(\chi) = \ddfrac{1}{|\omnu|} \sum_{\ba_{i',k'} =\bfnu} \ell_{i',k'}^j(v_h), \quad j=1,\dots,\mu_{\bfnu}.
 \end{equation}
 In effect, this uniquely defines $\chi \in V_h$.
  
 Using \eqref{lem1.2} we get
 \begin{equation}\label{prop2.4}
 \big|\ell_{i,k}^j(v_h) -\ell_{i,k}^j (\chi)\big|^2 \le 
 c\sum_{l=1}^{|\omnu|-1} \big|\ell_{i,k_l}^j (v_h) -\ell_{i,k_{l+1}}^j(v_h)\big|^2 .
 \end{equation}
 Now $\ell_{i,k_l}^j (v_h) -\ell_{i,k_{l+1}}^j(v_h)$ is the jump, denoted  $[\ell_{i,k}^j(v_h)]_e$, of  $\ell_{i,k}^j$
 across the edge $e=k_l \cap k_{l+1}$. Note that the jumps are zero whenever $|\omnu|=1$. For the others,
 recalling that the d.o.f.'s are directional derivatives of type $j$, we have
  the inequality 
 \begin{eqnarray}\label{prop2.5}
 \big| [ \ell_{i,k}^j (v_h) ]_e \big| & = & \big| [ D^{s(j)} v_h(\ba_{i,k}^j) ]_e \big| \le 
     \big| [ D^{s(j)} v_h] \big|_{L^\infty(e)} \nonumber \\
 & \le &   c h_e^\frac{1-n}{2} \big| [ D^{s(j)} (v_h) ]\big|_{L^2(e)}
 \end{eqnarray} 
 holding by virtue of an $L^\infty$-$L^2$ inverse inequality on $e$. Combining \eqref{prop2.2}-\eqref{prop2.5} we obtain 
 \eqref{prop2.1}.
  \hfill $\blacksquare$

\section{Examples}
In this section we will present some cases of approximation results developed along lines previously outlined.
In the first example, as well as the others, we consider a two-dimensional partition $\cT_h$ of a domain $\Omega$ consisting
of triangles or more appropriately, tri-sided cells. We allow the peripheral triangles to have one curved side. 
We assume that the triangles are shape-regular (satisfy
the minimum angle condition)
and that the mesh is locally quasi-uniform in the sense that the diameters of adjacent
triangles are commensurate. Also, as mentioned earlier, peripheral cells contain a regular triangle which
will be used to define the nodes i.e. points.

\begin{figure}[ht]

\begin{center}
\begin{tikzpicture}[scale=2]

\draw[thick,variable=\t,domain=0:360,samples=500] plot ({2*sin(\t) }, {1.5*cos(\t)});

\foreach \i in {0,1,2,3,4,5,6,7,8,9,10,11} {
\fill ({ 2*cos(30*\i)}, {1.5*sin(30*\i)}) circle[radius=1pt];
}

\foreach \i in {0,1,2,3,4,5,6,7,8,9,10,11} { \coordinate (a\i) at ({2*cos(30*\i)}, {1.5*sin(30*\i)});}

\coordinate (a12) at (1.25,0); \fill (1.25,0) circle[radius=1pt];
\coordinate (a13) at (.75,.7);  \fill (.75,.7) circle[radius=1pt];
\coordinate (a14) at (.75,-.7); \fill (.75,-.7) circle[radius=1pt];
\coordinate (a15) at (-.5,.7);   \fill (-.5,.7) circle[radius=1pt];
\coordinate (a16) at (0,0);      \fill (0,0) circle[radius=1pt];
\coordinate (a17) at (-1,0);     \fill (-1,0) circle[radius=1pt];
\coordinate (a18) at (-.5,-.7);  \fill (-.5,-.7) circle[radius=1pt];

\draw [dashed, thick] (a0)--(a1)--(a2)--(a3)--(a4)--(a5)--(a6)--(a7)--(a8)--(a9)--(a10)--(a11)--(a0);

\draw (a0)--(a12)--(a1); 
\draw (a1)--(a13)--(a2);
\draw (a2)--(a13)--(a3);
\draw (a3)--(a15)--(a4);
\draw (a4)--(a15)--(a5);
\draw (a5)--(a17)--(a6);
\draw (a6)--(a17)--(a7);
\draw (a7)--(a18)--(a8);
\draw (a8)--(a18)--(a9);
\draw (a9)--(a14)--(a10);
\draw (a10)--(a14)--(a11);
\draw (a11)--(a12)--(a0);

\draw (a1)--(a12)--(a13);
\draw (a12)--(a13)--(a16)--(a12);
\draw (a13)--(a15)--(a16);
\draw (a15)--(a16)--(a17)--(a15);
\draw (a16)--(a17)--(a18);
\draw (a14)--(a16)--(a18)--(a14);
\draw (a12)--(a14)--(a16);

\end{tikzpicture}
\end{center}
\caption{}\label{figex1}
\end{figure}

{\bf Example 1}
We consider first the example of an element referred to as Courant's triangle. We first
establish an approximation result which is equivalent in some sense to that given by the 
Cl\'{e}ment interpolant.
It gives an $O(h)$ estimate for the $L^2$-norm of the error in terms of the $H^1$-seminorm of $u$.
Approximation results of this type have found wide application in a posteriori
estimation. The next result is a more usual one giving $O(h^2)$ error bound for functions $u \in H^2(\Omega)$.

\begin{thm}\label{thmex1} Let $\cT_h$ be the triangulation of $\Omega$ s shown in Figure 1
and let $(K, \cP_K, \Sigma_K)$ be the affine Lagrange element using as d.o.f.'s values at the vertices
leading to the space $V_h= C^0(\Omega) \cap \cP_1(\cT_h)$.

(i) Let $u\in C^0(\Omega) \cap H^1(\cT_h)$. There exists $\chi \in V_h$ such that
\begin{equation}\label{thmex1.a}
\|u-\chi\|_{K} \le c h |u|_{1,\omega_K}, \quad h = \max_{K' \in \omega_K} h_{K'}
\end{equation}
where $\omega_K$ is the collection of triangles that share a vertex with $K$.
\vskip 3pt
(ii) Let $u\in C^0(\Omega) \cap H^2(\cT_h)$. There exists $\chi \in V_h$ such that
\begin{equation}\label{thmex1.b}
|u-\chi|_{l,K} \le c h^{2-l}  |u|_{2,\omega_K}, \quad l=0,1.
\end{equation}
\end{thm}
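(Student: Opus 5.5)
The plan has three steps: approximate $u$ cellwise by Taylor polynomials using Theorem~\ref{Taylor}, glue these into an element of $V_h$ with Proposition~\ref{prop2}, and close the argument by bounding the resulting jumps with the Trace Inequality.

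\textbf{Step 1 (local Taylor approximation).} On each cell $K'\in\cT_h$ apply Theorem~\ref{Taylor} to $u|_{K'}$. For part (i) take $k=0$, which gives a constant $v_{K'}$ with $\|u-v_{K'}\|_{K'}\le c h_{K'}|u|_{1,K'}$. For part (ii) take $k=1$, which gives an affine $v_{K'}$ with $|u-v_{K'}|_{l,K'}\le c h_{K'}^{2-l}|u|_{2,K'}$ for $l=0,1$. Let $v_h\in\cP_m(\cT_h)$, with $m=0$ or $1$, denote the resulting discontinuous function. For the Courant element $\cP_1(K)=\cP_K$, so the inclusion hypothesis of Proposition~\ref{prop2} holds.

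\textbf{Step 2 (averaging).} Apply Proposition~\ref{prop2} to $v_h$. The degrees of freedom are vertex values only, so $\cJ=\{0\}$ and $s(j)=0$. This yields $\chi\in V_h$ with
\[
|v_h-\chi|_{l,K}^2\le c\sum_{e\in\cE_K} h_e^{1-2l}\,|[v_h]|_{e}^2 .
\]
For $l=0$ this holds in (i) and (ii), and for $l=1$ in (ii). Every edge $e\in\cE_K$ lies inside $\omega_K$. By the footnote to Proposition~\ref{prop2}, boundary edges are excluded.

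\textbf{Step 3 (jumps via traces).} This is the main step. Let $e=K_1\cap K_2$. Since $u$ is continuous, its traces from $K_1$ and $K_2$ agree on $e$, so
\[
[v_h]_e=(v_{K_1}-u)|_e-(v_{K_2}-u)|_e .
\]
The Trace Inequality applied on each $K_i$ gives
\[
|v_{K_i}-u|_{L^2(e)}^2\le c\left(h^{-1}\|u-v_{K_i}\|_{K_i}^2+h\,|u-v_{K_i}|_{1,K_i}^2\right).
\]
In case (i) the second term is $h|u|_{1,K_i}^2$, because $v_{K_i}$ is constant. Together with Step 1 this gives $|[v_h]|_e^2\le c\,h\,|u|_{1,K_1\cup K_2}^2$. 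In case (ii) Step 1 with $l=0,1$ gives $|[v_h]|_e^2\le c\,h^3|u|_{2,K_1\cup K_2}^2$. Inserting these bounds into Step 2 and using local quasi-uniformity ($h_e\sim h$) together with the bounded number of edges in $\cE_K$ gives
\[
\|v_h-\chi\|_K\le ch|u|_{1,\omega_K}
\]
in case (i), and
\[
|v_h-\chi|_{l,K}\le ch^{2-l}|u|_{2,\omega_K}
\]
in case (ii).

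\textbf{Conclusion.} The triangle inequality
\[
|u-\chi|_{l,K}\le|u-v_K|_{l,K}+|v_K-\chi|_{l,K}
\]
combined with Step 1 gives \eqref{thmex1.a} and \eqref{thmex1.b}.

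The delicate points are as follows.
\begin{itemize}
\item The jump of $u$ across an interior edge vanishes. This needs the traces of $u|_{K_1}$ and $u|_{K_2}$ to coincide, which the assumption $u\in C^0(\Omega)$ supplies.
\item The constants in the Trace Inequality and in Theorem~\ref{Taylor} must be uniform over cells, including the peripheral curved ones. This follows from shape regularity and (A2).
\item The ordering of cells around each vertex used in Lemma~\ref{lem1} must be available; in two dimensions it is.
\end{itemize}
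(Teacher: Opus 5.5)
Your proposal is correct and follows essentially the same route as the paper. You take cellwise Taylor polynomials of degree $0$ (resp.\ $1$) from Theorem~\ref{Taylor}, apply Oswald averaging via Proposition~\ref{prop2}, and control the jumps through $[v_h]_e=[v_h-u]_e$ (continuity of $u$) together with the trace inequality and Theorem~\ref{Taylor} again. Your bookkeeping is, if anything, slightly more careful than the paper's: you write the local domain as $K_1\cup K_2$ where the paper writes $|u|_{1,K}$, and you state explicitly that local quasi-uniformity is what allows replacing $h_e$ and $h_{K_i}$ by $h$.
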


{\bf Proof} 
(i) Let $v_h \in \cP_0(\cT_h)$ be the collection of cell-Taylor polynomials
of degree 0 on $\cT_h$ and let $\chi\in V_h$ be constructed from $v_h$ by averaging as in Proposition \ref{prop2}

Using \eqref{Taylor0} and \eqref{prop2.1} we have
\begin{equation}\label{thmex1.1}
\|u-\chi\|_K^2 \le 2 \|u-v_h\|_K^2 + 2\|v_h-\chi\|_K^2 \le c h_K^2 |u|_{1,K}^2 +c \sum_{e\in \cE_K}h_e |[v_h]|_e^2
\end{equation}
Now $[v_h]\big|_e=[v_h-u]\big|_e$  since $u$ is continuous.
Hence, using the trace inequality \eqref{Trace0} and then \eqref{Taylor0}, it follows that
\begin{eqnarray}\label{tmex1.2}
|[v_h]|_e^2 & \le & c \sum_{i=1}^2 \left( h_{K_i}^{-1} \|u-v_h\|_{K_i}^2 + h_{K_i} |u-v_h|_{1,K_i}^2\right), \quad e = K_1 \cap K_2 \\
& \le & c h |u|_{1,K}^2.   \nonumber
\end{eqnarray}
This establishes \eqref{thmex1.a}. 

The proof of \eqref{thmex1.b} follows the same steps except $v_h$ is now the piecewise affine
Taylor polynomial approximation of $u$. 
Using \eqref{Taylor0} and \eqref{prop2.1} we have for $l=0,1$
\begin{equation}\label{thmex1.2}
|u-\chi |_{l,K}^2 \le 2 |u-v_h|_{l,K} + 2|v_h-\chi |_{l,K}^2 \le c h_K^{4-2l} |u|_{2,K}^2 +c \sum_{e\in \cE_K}h_e^{1-2l} |[v_h]|_e^2.
\end{equation}
The rest of the proof uses \eqref{Trace0} and then \eqref{Taylor0} as before.
 \hfill $\blacksquare$
 \vskip 6pt
 
 We draw attention to an advantage of using Taylor approximations exemplified by the Cl\'{e}ment or
 Scott-Zhang interpolant.
 By choosing to work with a low-order Taylor approximation, we obtain upper bounds of the error
 which are lower than $k$, the order of the finite element $P_K$  yet requiring lower regularity from $u$. 
 This is a feature which is precisely not allowed by
 the Bramble-Hilbert type approach.

\vskip 6pt

 {\bf Example 2} The Argyris Triangle $(K, \cP_5(K), \Sigma_K)$ 
is a globally $C^1$, 21-degrees of freedom element. The degrees of freedom (d.o.f.'s) can be specified in a
variety of ways of which we choose
\[
\Sigma_K =\{ \partial^\alpha p(\ba_{i,K}), \ i=1,2,3, \ |\alpha| \le 2, \quad
 \nabla p(\bb_K^{ij})\cdot \bfn^{ij}, \, 1\le i<j\le 3\}
\]
where $\ba_{i,K}$ denote the vertices of $K$ and $\bb_K^{ij}$ are the midpoints of edges and $\bfn^{ij}$
is a unit normal vector to side $[\ba_{i,K}\, \ba_{j,K}]$. 


\begin{figure}[ht]     
\begin{center}
\begin{tikzpicture}[scale=2]
\draw[thick,variable=\t,domain=0:360,samples=500] plot ({2*sin(\t) }, {1.5*cos(\t)});

\foreach \i in {0,1,2,3,4,5,6,7,8,9,10,11} { 
\def\x{2*cos(30*\i};
\def\y{1.5*sin(30*\i};
\coordinate (a\i) at (\x,\y);
}
\def\x12{1.25};  \def\y12{.0};  \coordinate (a12) at (\x12,\y12);
\def\x13{.75};   \def\y13{.7};   \coordinate (a13) at (\x13,\y13);
\def\x14{.75};   \def\y14{-.7};  \coordinate (a14) at (\x14,\y14);
\def\x15{-.5};    \xdef\y15{.7}; \coordinate (a15) at (\x15,\y15); \coordinate (m) at (\x15,\y15);
\def\x16{0};      \def\y16{.0};   \coordinate (a16) at (\x16,\y16);
\def\x17{-1};     \def\y17{.0};   \coordinate (a17) at (\x17,\y17);
\def\x18{-.5};    \def\y18{-.7};  \coordinate (a18) at (\x18,\y18);

\foreach \i in {0,1,2,3,4,5,6,7,8,9,10,11,12,13,14,15,16,17,18} {
    \fill (a\i) circle[radius=1pt]; \draw[thick] (a\i) circle[radius=2pt]; \draw[thick] (a\i) circle[radius=3.5pt];
}

\draw [dashed, thick] (a0)--(a1)--(a2)--(a3)--(a4)--(a5)--(a6)--(a7)--(a8)--(a9)--(a10)--(a11)--(a0);

\draw (a0)--(a12)--(a1); 
\draw (a1)--(a13)--(a2);
\draw (a2)--(a13)--(a3);
\draw (a3)--(a15)--(a4);
\draw (a4)--(a15)--(a5);
\draw (a5)--(a17)--(a6);
\draw (a6)--(a17)--(a7);
\draw (a7)--(a18)--(a8);
\draw (a8)--(a18)--(a9);
\draw (a9)--(a14)--(a10);
\draw (a10)--(a14)--(a11);
\draw (a11)--(a12)--(a0);

\draw (a1)--(a12)--(a13);
\draw (a12)--(a13)--(a16)--(a12);
\draw (a13)--(a15)--(a16);
\draw (a15)--(a16)--(a17)--(a15);
\draw (a16)--(a17)--(a18);
\draw (a14)--(a16)--(a18)--(a14);
\draw (a12)--(a14)--(a16);


\foreach \i in {0,1,2,3,4,5,6,7,8,9,10} {
\edef\ip1{\i}
  \pgfmathparse{\ip1+1}
  \edef\ip1{\pgfmathresult}
\coordinate (m) at ($.48*(a\i)+.48*(a\ip1)$);
\coordinate (n) at  ($.52*(a\i)+.52*(a\ip1)$);
\draw[thick] (m)--(n);
}

\coordinate (m) at ($.48*(a11)+.48*(a0)$);
\coordinate (n) at  ($.52*(a11)+.52*(a0)$);
\draw[thick] (m)--(n);

\draw[thick] (1.625,-.07)--(1.625,.07);
\draw[thick] (.625,-.07)--(.625,.07);
\draw[thick] (-.5,-.07)--(-.5,.07);
\draw[thick] (-1.5,-.07)--(-1.5,.07);

\def\d{.065};
\def\s{.714}; \def\a{1}; \def\b{.35}; 
\coordinate (m)  at (\a-\d, {\s*(-\d)+\b}); 
\coordinate (n)   at (\a+\d,{\s*(\d)+\b}); \draw[thick] (m)--(n);

\def\s{-.643}; \def\a{1.491}; \def\b{.375}; 
\coordinate (m)  at (\a-\d, {\s*(-\d)+\b}); 
\coordinate (n)  at  (\a+\d,{\s*(\d)+\b}); \draw[thick] (m)--(n);

\def\s{-.417}; \def\a{.875}; \def\b{1}; 
\coordinate (m)  at (\a-\d, {\s*(-\d)+\b}); 
\coordinate (n)  at  (\a+\d,{\s*(\d)+\b}); \draw[thick] (m)--(n);

\def\s{.9375}; \def\a{.375}; \def\b{1.1}; 
\coordinate (m)  at (\a-\d, {\s*(-\d)+\b}); 
\coordinate (n)  at  (\a+\d,{\s*(\d)+\b}); \draw[thick] (m)--(n);

\def\d{.004};
\def\s{-19.64}; \def\a{1.241}; \def\b{.725}; 
\coordinate (m)  at (\a-\d, {\s*(-\d)+\b}); 
\coordinate (n)   at (\a+\d,{\s*(\d)+\b}); \draw[thick] (m)--(n);

\def\s{-20}; \def\a{.125}; \def\b{.7}; 
\coordinate (m)  at (\a-\d, {\s*(-\d)+\b}); 
\coordinate (n)  at  (\a+\d,{\s*(\d)+\b}); \draw[thick] (m)--(n);

\def\s{-20}; \def\a{-1.116}; \def\b{.725}; 
\coordinate (m)  at (\a-\d, {\s*(-\d)+\b}); 
\coordinate (n)  at  (\a+\d,{\s*(\d)+\b}); \draw[thick] (m)--(n);

\def\s{-19.64}; \def\a{1.241}; \def\b{-.725}; 
\coordinate (m)  at (\a-\d, {\s*(-\d)+\b}); 
\coordinate (n)   at (\a+\d,{\s*(\d)+\b}); \draw[thick] (m)--(n);

\def\s{-20}; \def\a{.125}; \def\b{-.7}; 
\coordinate (m)  at (\a-\d, {\s*(-\d)+\b}); 
\coordinate (n)  at  (\a+\d,{\s*(\d)+\b}); \draw[thick] (m)--(n);

\def\s{-20}; \def\a{-1.116}; \def\b{-.725}; 
\coordinate (m)  at (\a-\d, {\s*(-\d)+\b}); 
\coordinate (n)  at  (\a+\d,{\s*(\d)+\b}); \draw[thick] (m)--(n);

\def\d{.06};
\xdef\s{-.625}; \def\a{-.25}; \def\b{1.1}; 
\coordinate (m)  at (\a-\d, {\s*(-\d)+\b}); 
\coordinate (n)  at  (\a+\d,{\s*(\d)+\b}); \draw[thick] (m)--(n);

\xdef\s{.833}; \def\a{-.75}; \def\b{1}; 
\coordinate (m)  at (\a-\d, {\s*(-\d)+\b}); 
\coordinate (n)  at  (\a+\d,{\s*(\d)+\b}); \draw[thick] (m)--(n);

\xdef\s{.714}; \def\a{-.25}; \def\b{.35}; 
\coordinate (m)  at (\a-\d, {\s*(-\d)+\b}); 
\coordinate (n)  at  (\a+\d,{\s*(\d)+\b}); \draw[thick] (m)--(n);

\xdef\s{-1.071}; \def\a{.375}; \def\b{.35}; 
\coordinate (m)  at (\a-\d, {\s*(-\d)+\b}); 
\coordinate (n)  at  (\a+\d,{\s*(\d)+\b}); \draw[thick] (m)--(n);

\xdef\s{-.714}; \def\a{-.75}; \def\b{.35}; 
\coordinate (m)  at (\a-\d, {\s*(-\d)+\b}); 
\coordinate (n)  at  (\a+\d,{\s*(\d)+\b}); \draw[thick] (m)--(n);

\xdef\s{.976}; \def\a{-1.366}; \def\b{.375}; 
\coordinate (m)  at (\a-\d, {\s*(-\d)+\b}); 
\coordinate (n)  at  (\a+\d,{\s*(\d)+\b}); \draw[thick] (m)--(n);

\xdef\s{.643}; \def\a{1.491}; \def\b{-.375}; 
\coordinate (m)  at (\a-\d, {\s*(-\d)+\b}); 
\coordinate (n)  at  (\a+\d,{\s*(\d)+\b}); \draw[thick] (m)--(n);

\xdef\s{-.714}; \def\a{1}; \def\b{-.35}; 
\coordinate (m)  at (\a-\d, {\s*(-\d)+\b}); 
\coordinate (n)  at  (\a+\d,{\s*(\d)+\b}); \draw[thick] (m)--(n);

\xdef\s{1.071}; \def\a{.375}; \def\b{-.35}; 
\coordinate (m)  at (\a-\d, {\s*(-\d)+\b}); 
\coordinate (n)  at  (\a+\d,{\s*(\d)+\b}); \draw[thick] (m)--(n);

\xdef\s{-.714}; \def\a{-.25}; \def\b{-.35}; 
\coordinate (m)  at (\a-\d, {\s*(-\d)+\b}); 
\coordinate (n)  at  (\a+\d,{\s*(\d)+\b}); \draw[thick] (m)--(n);

\xdef\s{.714}; \def\a{-.75}; \def\b{-.35}; 
\coordinate (m)  at (\a-\d, {\s*(-\d)+\b}); 
\coordinate (n)  at  (\a+\d,{\s*(\d)+\b}); \draw[thick] (m)--(n);

\xdef\s{-.976}; \def\a{-1.366}; \def\b{-.375}; 
\coordinate (m)  at (\a-\d, {\s*(-\d)+\b}); 
\coordinate (n)  at  (\a+\d,{\s*(\d)+\b}); \draw[thick] (m)--(n);

\xdef\s{.417}; \def\a{.875}; \def\b{-1.}; 
\coordinate (m)  at (\a-\d, {\s*(-\d)+\b}); 
\coordinate (n)  at  (\a+\d,{\s*(\d)+\b}); \draw[thick] (m)--(n);

\xdef\s{-.9375}; \def\a{.375}; \def\b{-1.1}; 
\coordinate (m)  at (\a-\d, {\s*(-\d)+\b}); 
\coordinate (n)  at  (\a+\d,{\s*(\d)+\b}); \draw[thick] (m)--(n);

\xdef\s{.625}; \def\a{-.25}; \def\b{-1.1}; 
\coordinate (m)  at (\a-\d, {\s*(-\d)+\b}); 
\coordinate (n)  at  (\a+\d,{\s*(\d)+\b}); \draw[thick] (m)--(n);

\xdef\s{-.833}; \def\a{-.75}; \def\b{-1.}; 
\coordinate (m)  at (\a-\d, {\s*(-\d)+\b}); 
\coordinate (n)  at  (\a+\d,{\s*(\d)+\b}); \draw[thick] (m)--(n);

\end{tikzpicture}
\end{center}
\caption{Argyris element: Global triangulation}\label{figex2}
\end{figure}

\vskip 6pt
The Argyris triangle is a finite element, c.f. Ciarlet \cite{Ciarlet}. The space $V_h$ of globally $C^1$,
piecewise quintics is constructed as usual. 

\begin{thm}\label{thmex2}
Let $u \in C^2(\Omega) \cap H^6(\cT_h)$. there exists $\chi \in V_h$ such that for $0 \le l \le 6$
\begin{equation}\label{thmex2.1}
|u-\chi|_{l,K} \le c h^{6-l} |u|_{6,\omega_K}, \quad h = \max_{K' \in \omega_K} h_{K'}
\end{equation}
{\bf Proof} 
Let $v_h \in \cP_5(\cT_h)$ be the collection of cell-Taylor polynomials
of degree 5 on $\cT_h$ and let $\chi\in V_h$ be constructed from $v_h$ by averaging as in Proposition \ref{prop2}

Using \eqref{Taylor0} and \eqref{prop2.1} we have
\begin{eqnarray}\label{thmex2.2}
|u-\chi|_{l,K}^2 & \le & 2 |u-v_h|_{l,K}^2 + 2|v_h-\chi|_{l,K}^2  \\ \nonumber & \le & c h_K^{12-2l} |u|_{6,K}^2 
         + c   \sum_{j\in \cJ} \sum_{e\in \cE_K}h_e^{1+2s(j)-2l}|[D^{s(j)}v_h]|_e^2 .
\end{eqnarray}
The directional derivatives in this case have order $0\le j\le 2$. Hence the jumps $[D^{s(j)}u]_e$ 
 vanish since $u$ belongs to $C^2(\Omega)$.
 
Thus, using the trace inequality \eqref{Trace0} and then \eqref{Taylor0}, it follows that
\begin{align*}
|[D^{s(j)} v_h]|_e^2 &= |[D^{s(j)} (v_h-u)]|_e^2 \le c \sum_{i=1}^2 \left( h_{K_i}^{-1-2s(j) } \|u-v_h\|_{K_i}^2  \right. \\
      & \left. + h_{K_i}^{1+2s(j)}  |u-v_h|_{1,K_i}^2\right) \le c h^{11-2s(j)}  \sum_{i=1}^2|u|_{6,K_i}^2 
\end{align*}
$e=K_1\cap K_2$. Using this in \eqref{thmex2.2} establishes \eqref{thmex2.1}. \hfill $\blacksquare$

\end{thm}
\vskip 6pt

\begin{rem} As in the case of  \eqref{thmex1.a}, we can use Taylor approximations of degree lower than 5.
The space $V_h$ being the same as above, the powers of $h$ are lower than 6 but so is the dependence
on the order of the seminorms of $u$.
\end{rem}
\vskip 6pt

{\bf Example 3} The Hsieh-Clough-Tocher (HCT) triangle.

Is also known as a {\it composite} or {\it macro element} in the sense that the set $P_K$ is composed
of piecewise cubic polynomials.
The triangle $K$ is divided into 3 triangles $K_i, \, i=1,2,3$ the point $\ba_K$ being interior
to $K$. On each triangle $K_i$ we have a cubic polynomial which are then assembled (``stitched'')
in a $C^1$ fashion and the global finite element space $V_h$ is also of class $C^1$. 
Figure (a) below should serve as a guide to the assembly of the 3 cubic polynomials into the
HCT macro element. These represent 18 equations of the 30 d.o.f.'s of the 3 cubics;
hence there are always nontrivial solutions. It is rather straightforward to show
that all these solutions are in $C^1(K)$.

\begin{table}[ht]
\begin{equation}\label{HCT}
\begin{split}
 & P_K = \{p \in C^1(K); \ p\big|_{K_i} \in \cP_3(K_i), \ 1\le i \le 3\}; \ \mbox{dim} P_K =12 \\
& \Sigma_K =\{p(\ba_i), \partial_1 p(\ba_i), \partial_2 p(\ba_i), \partial_\nu p(\bb_i), 1 \le i \le 3\}.
\end{split}
\end{equation}
\caption{Hsieh-Clough-Tocher element}\label{table1}
\end{table}

A proof that this is indeed a finite element can be found in \cite{Ciarlet} pp. 341--342.
Essentially, 18 equations are used to stitch the 3 cubics into composite functions in $C^1(K)$
with 12 d.o.f.'s  that are listed in \eqref{HCT}.
 \vskip 12pt
 
 \begin{figure}[ht]
  \begin{minipage}{.45\linewidth}
\begin{tikzpicture}
\draw[solid] (0,0)--(5,0)--(2,3)--(0,0);
\draw[solid] (0,0)--(7./3.,1)--(5,0);  \draw[solid] (7/3,1)--(2,3);
\fill (0,0) circle[radius=2pt]; \draw (0,0) circle[radius=10pt];
\fill (5,0) circle[radius=2pt]; \draw (5,0) circle[radius=10pt];
\fill (2,3) circle[radius=2pt]; \draw (2,3) circle[radius=10pt];
\fill (7/3,1) circle[radius=2pt]; \draw (7/3,1) circle[radius=10pt]; 
\node at (7/3,.8) {$\ba_K$}; 
 \node at (-.6,0) {$\ba_3$}; 
 \node at (5.6,0) {$\ba_1$};
 \node at (2.6,3) {$\ba_2$};
 
\def\alpha{.05};
\draw[solid] (7/6-\alpha,.5+7*\alpha/3)--(7/6+\alpha,.5-7*\alpha/3);
\draw[solid] (22/6+\alpha,.5+8*\alpha/3)--(22/6-\alpha,.5-8*\alpha/3);
\draw[solid] (13/6+2*\alpha,2+\alpha/3)--(13/6-2*\alpha,2-\alpha/3);
\node at (2.5,-.7) {\rm (a) HCT Element: Construction};
\node at (2.5,.3) {$K_1$}; \node at (3.1,1.3) {$K_2$}; \node at (1.5,1.3) {$K_3$};
\end{tikzpicture}
\end{minipage}
\begin{minipage}{.45\linewidth}
\begin{tikzpicture}
\draw[solid] (0,0)--(5,0)--(2,3)--(0,0);
\draw[solid] (0,0)--(7./3.,1)--(5,0);  \draw[solid] (7/3,1)--(2,3);
\fill (0,0) circle[radius=2pt]; \draw (0,0) circle[radius=10pt];
\fill (5,0) circle[radius=2pt]; \draw (5,0) circle[radius=10pt];
\fill (2,3) circle[radius=2pt]; \draw (2,3) circle[radius=10pt];
\node at (7/3,.8) {$\ba_K$};  
 \node at (-.6,0) {$\ba_3$};  \node at (2.8,-.3) {$\bb_1$};
 \node at (5.6,0) {$\ba_1$}; \node at (4,1.5) {$\bb_2$};
 \node at (2.6,3) {$\ba_2$}; \node at (.6,1.5) {$\bb_3$};
 
\def\alpha{.15};  \draw[solid] (2.5,\alpha)--(2.5,-\alpha);
\def\alpha{.1};    \draw[solid] (3.5+\alpha,1.5+\alpha)--(3.5-\alpha,1.5-\alpha);
\def\alpha{.04};  \draw[solid] (1-3*\alpha,1.5+2*\alpha)--(1+3*\alpha,1.5-2*\alpha);
\node at (2.5,-.7) {\rm (b) HCT Element: d.o.f.'s};
\node at (2.5,.4) {$K_1$}; \node at (3.0,1.3) {$K_2$}; \node at (1.6,1.3) {$K_3$};
\end{tikzpicture}
\end{minipage}
 \end{figure}
 
\vskip 8pt
A very interesting feature of the HCT and some other composite elements is that they 
give rise to globally
$C^1$ subspaces, yet their d.o.f.'s do not involve second-order partial derivatives. In fact, this would have
been impossible to achieve with polynomials as a deep result of \v{Z}eni\v{s}ek shows 
 \cite{Zenisek73, Zenisek74}. This feature of composite elements has been shown to be very
 useful in a a variety of contexts including domain decomposition \cite{GHV2009, KC2018}. 
 See also \cite{DDPS79} for the construction of such composite elements.
 
 As shown in figure \ref{figex3.2}, we impose a triangulation $\cT_h$ on $\Omega$.
 We construct a global space $V_h $ of HCT elements which is a subspace of $C^1(\Omega) \cap \cP_K(\cT_h)$.

\begin{figure}[ht]
\begin{center}
\begin{tikzpicture}[scale=2]

\draw[thick,variable=\t,domain=0:360,samples=500] plot ({2*sin(\t) }, {1.5*cos(\t)});

\foreach \i in {0,1,2,3,4,5,6,7,8,9,10,11} { 
\def\x{2*cos(30*\i};
\def\y{1.5*sin(30*\i};
\coordinate (a\i) at (\x,\y);
}
\def\x12{1.25};  \def\y12{.0};  \coordinate (a12) at (\x12,\y12);
\def\x13{.75};   \def\y13{.7};   \coordinate (a13) at (\x13,\y13);
\def\x14{.75};   \def\y14{-.7};  \coordinate (a14) at (\x14,\y14);
\def\x15{-.5};    \xdef\y15{.7}; \coordinate (a15) at (\x15,\y15); \coordinate (m) at (\x15,\y15);
\def\x16{0};      \def\y16{.0};   \coordinate (a16) at (\x16,\y16);
\def\x17{-1};     \def\y17{.0};   \coordinate (a17) at (\x17,\y17);
\def\x18{-.5};    \def\y18{-.7};  \coordinate (a18) at (\x18,\y18);

\foreach \i in {0,1,2,3,4,5,6,7,8,9,10,11,12,13,14,15,16,17,18} {
    \fill (a\i) circle[radius=1pt]; \draw[thick] (a\i) circle[radius=2pt];
}

\draw [dashed, thick] (a0)--(a1)--(a2)--(a3)--(a4)--(a5)--(a6)--(a7)--(a8)--(a9)--(a10)--(a11)--(a0);

\draw (a0)--(a12)--(a1); 
\draw (a1)--(a13)--(a2);
\draw (a2)--(a13)--(a3);
\draw (a3)--(a15)--(a4);
\draw (a4)--(a15)--(a5);
\draw (a5)--(a17)--(a6);
\draw (a6)--(a17)--(a7);
\draw (a7)--(a18)--(a8);
\draw (a8)--(a18)--(a9);
\draw (a9)--(a14)--(a10);
\draw (a10)--(a14)--(a11);
\draw (a11)--(a12)--(a0);

\draw (a1)--(a12)--(a13);
\draw (a12)--(a13)--(a16)--(a12);
\draw (a13)--(a15)--(a16);
\draw (a15)--(a16)--(a17)--(a15);
\draw (a16)--(a17)--(a18);
\draw (a14)--(a16)--(a18)--(a14);
\draw (a12)--(a14)--(a16);


\foreach \i in {0,1,2,3,4,5,6,7,8,9,10} {
\edef\ip1{\i}
  \pgfmathparse{\ip1+1}
  \edef\ip1{\pgfmathresult}
\coordinate (m) at ($.48*(a\i)+.48*(a\ip1)$);
\coordinate (n) at  ($.52*(a\i)+.52*(a\ip1)$);
\draw[thick] (m)--(n);
}

\coordinate (m) at ($.48*(a11)+.48*(a0)$);
\coordinate (n) at  ($.52*(a11)+.52*(a0)$);
\draw[thick] (m)--(n);

\draw[thick] (1.625,-.07)--(1.625,.07);
\draw[thick] (.625,-.07)--(.625,.07);
\draw[thick] (-.5,-.07)--(-.5,.07);
\draw[thick] (-1.5,-.07)--(-1.5,.07);

\def\d{.065};
\def\s{.714}; \def\a{1}; \def\b{.35}; 
\coordinate (m)  at (\a-\d, {\s*(-\d)+\b}); 
\coordinate (n)   at (\a+\d,{\s*(\d)+\b}); \draw[thick] (m)--(n);

\def\s{-.643}; \def\a{1.491}; \def\b{.375}; 
\coordinate (m)  at (\a-\d, {\s*(-\d)+\b}); 
\coordinate (n)  at  (\a+\d,{\s*(\d)+\b}); \draw[thick] (m)--(n);

\def\s{-.417}; \def\a{.875}; \def\b{1}; 
\coordinate (m)  at (\a-\d, {\s*(-\d)+\b}); 
\coordinate (n)  at  (\a+\d,{\s*(\d)+\b}); \draw[thick] (m)--(n);

\def\s{.9375}; \def\a{.375}; \def\b{1.1}; 
\coordinate (m)  at (\a-\d, {\s*(-\d)+\b}); 
\coordinate (n)  at  (\a+\d,{\s*(\d)+\b}); \draw[thick] (m)--(n);

\def\d{.004};
\def\s{-19.64}; \def\a{1.241}; \def\b{.725}; 
\coordinate (m)  at (\a-\d, {\s*(-\d)+\b}); 
\coordinate (n)   at (\a+\d,{\s*(\d)+\b}); \draw[thick] (m)--(n);

\def\s{-20}; \def\a{.125}; \def\b{.7}; 
\coordinate (m)  at (\a-\d, {\s*(-\d)+\b}); 
\coordinate (n)  at  (\a+\d,{\s*(\d)+\b}); \draw[thick] (m)--(n);

\def\s{-20}; \def\a{-1.116}; \def\b{.725}; 
\coordinate (m)  at (\a-\d, {\s*(-\d)+\b}); 
\coordinate (n)  at  (\a+\d,{\s*(\d)+\b}); \draw[thick] (m)--(n);

\def\s{-19.64}; \def\a{1.241}; \def\b{-.725}; 
\coordinate (m)  at (\a-\d, {\s*(-\d)+\b}); 
\coordinate (n)   at (\a+\d,{\s*(\d)+\b}); \draw[thick] (m)--(n);

\def\s{-20}; \def\a{.125}; \def\b{-.7}; 
\coordinate (m)  at (\a-\d, {\s*(-\d)+\b}); 
\coordinate (n)  at  (\a+\d,{\s*(\d)+\b}); \draw[thick] (m)--(n);

\def\s{-20}; \def\a{-1.116}; \def\b{-.725}; 
\coordinate (m)  at (\a-\d, {\s*(-\d)+\b}); 
\coordinate (n)  at  (\a+\d,{\s*(\d)+\b}); \draw[thick] (m)--(n);

\def\d{.06};
\xdef\s{-.625}; \def\a{-.25}; \def\b{1.1}; 
\coordinate (m)  at (\a-\d, {\s*(-\d)+\b}); 
\coordinate (n)  at  (\a+\d,{\s*(\d)+\b}); \draw[thick] (m)--(n);

\xdef\s{.833}; \def\a{-.75}; \def\b{1}; 
\coordinate (m)  at (\a-\d, {\s*(-\d)+\b}); 
\coordinate (n)  at  (\a+\d,{\s*(\d)+\b}); \draw[thick] (m)--(n);

\xdef\s{.714}; \def\a{-.25}; \def\b{.35}; 
\coordinate (m)  at (\a-\d, {\s*(-\d)+\b}); 
\coordinate (n)  at  (\a+\d,{\s*(\d)+\b}); \draw[thick] (m)--(n);

\xdef\s{-1.071}; \def\a{.375}; \def\b{.35}; 
\coordinate (m)  at (\a-\d, {\s*(-\d)+\b}); 
\coordinate (n)  at  (\a+\d,{\s*(\d)+\b}); \draw[thick] (m)--(n);

\xdef\s{-.714}; \def\a{-.75}; \def\b{.35}; 
\coordinate (m)  at (\a-\d, {\s*(-\d)+\b}); 
\coordinate (n)  at  (\a+\d,{\s*(\d)+\b}); \draw[thick] (m)--(n);

\xdef\s{.976}; \def\a{-1.366}; \def\b{.375}; 
\coordinate (m)  at (\a-\d, {\s*(-\d)+\b}); 
\coordinate (n)  at  (\a+\d,{\s*(\d)+\b}); \draw[thick] (m)--(n);

\xdef\s{.643}; \def\a{1.491}; \def\b{-.375}; 
\coordinate (m)  at (\a-\d, {\s*(-\d)+\b}); 
\coordinate (n)  at  (\a+\d,{\s*(\d)+\b}); \draw[thick] (m)--(n);

\xdef\s{-.714}; \def\a{1}; \def\b{-.35}; 
\coordinate (m)  at (\a-\d, {\s*(-\d)+\b}); 
\coordinate (n)  at  (\a+\d,{\s*(\d)+\b}); \draw[thick] (m)--(n);

\xdef\s{1.071}; \def\a{.375}; \def\b{-.35}; 
\coordinate (m)  at (\a-\d, {\s*(-\d)+\b}); 
\coordinate (n)  at  (\a+\d,{\s*(\d)+\b}); \draw[thick] (m)--(n);

\xdef\s{-.714}; \def\a{-.25}; \def\b{-.35}; 
\coordinate (m)  at (\a-\d, {\s*(-\d)+\b}); 
\coordinate (n)  at  (\a+\d,{\s*(\d)+\b}); \draw[thick] (m)--(n);

\xdef\s{.714}; \def\a{-.75}; \def\b{-.35}; 
\coordinate (m)  at (\a-\d, {\s*(-\d)+\b}); 
\coordinate (n)  at  (\a+\d,{\s*(\d)+\b}); \draw[thick] (m)--(n);

\xdef\s{-.976}; \def\a{-1.366}; \def\b{-.375}; 
\coordinate (m)  at (\a-\d, {\s*(-\d)+\b}); 
\coordinate (n)  at  (\a+\d,{\s*(\d)+\b}); \draw[thick] (m)--(n);

\xdef\s{.417}; \def\a{.875}; \def\b{-1.}; 
\coordinate (m)  at (\a-\d, {\s*(-\d)+\b}); 
\coordinate (n)  at  (\a+\d,{\s*(\d)+\b}); \draw[thick] (m)--(n);

\xdef\s{-.9375}; \def\a{.375}; \def\b{-1.1}; 
\coordinate (m)  at (\a-\d, {\s*(-\d)+\b}); 
\coordinate (n)  at  (\a+\d,{\s*(\d)+\b}); \draw[thick] (m)--(n);

\xdef\s{.625}; \def\a{-.25}; \def\b{-1.1}; 
\coordinate (m)  at (\a-\d, {\s*(-\d)+\b}); 
\coordinate (n)  at  (\a+\d,{\s*(\d)+\b}); \draw[thick] (m)--(n);

\xdef\s{-.833}; \def\a{-.75}; \def\b{-1.}; 
\coordinate (m)  at (\a-\d, {\s*(-\d)+\b}); 
\coordinate (n)  at  (\a+\d,{\s*(\d)+\b}); \draw[thick] (m)--(n);

\end{tikzpicture}
\end{center}
\caption{HCT. Global triangulation}\label{figex3.2}
\end{figure}

We now present our approximation result concerning the HCT element. The proof follows basically the same 
lines already seen in the previous two examples.
\vskip 6pt

\begin{thm}\label{thmex3}
Let $u \in C^1(\Omega) \cap H^4(\cT_h)$. there exists $\chi \in V_h$ such that for l=0,\dots,4
\begin{equation}\label{thmex3.1}
|u-\chi|_{l,K} \le c h^{4-l} |u|_{4,\omega_K}, \quad h = \max_{K' \in \omega_K} h_{K'}
\end{equation}
{\bf Proof} 
Let $v_h \in \cP_3(\cT_h)$ be the collection of cell-Taylor polynomials
of degree 3 of $u$ on $\cT_h$. Note that $\cP_K$ contains $\cP_3(K)$ and hence
$\chi \in V_h$ can be constructed by averaging as in Proposition \ref{prop2}

Using \eqref{Taylor0} and \eqref{prop2.1}, for l=0,\dots,4  we have
\begin{eqnarray}\label{thmex3.2}
|u-\chi|_{l,K}^2 & \le & 2 |u-v_h|_{l,K}^2 + 2|v_h-\chi|_{l,K}^2  \\ \nonumber 
& \le & c h_K^{8-2l} |u|_{4,K}^2   + c   \sum_{j\in \cJ} \sum_{e\in \cE_K}h_e^{1+2s(j)-2l}|[D^{s(j)}v_h]|_e^2 .
\end{eqnarray}
The directional derivatives in this case have order $0\le j\le 1$. Hence the jumps $[D^{s(j)}u]_e$ 
 vanish since $u$ belongs to $C^1(\Omega)$.
 Thus, using the trace inequality \eqref{Trace0} and then \eqref{Taylor0}, it follows that 
\begin{align*}
|[D^{s(j)} v_h]|_e^2 &= |[D^{s(j)}( v_h-u)]|_e^2  \le  c \sum_{i=1}^2 \left( h_{K_i}^{-1-2s(j) } \|u-v_h\|_{K_i}^2 \right. \\
        & \left. + h_{K_i}^{1+2s(j)} |u-v_h|_{1,K_i}^2\right) \le c h^{7-2s(j)} \sum_{i=1}^2 |u|_{4,K_i}^2
\end{align*}
$e=K_1 \cap K_2$.
Using this in \eqref{thmex3.2} establishes \eqref{thmex3.1}. \hfill $\blacksquare$

\end{thm}
\vskip 6pt

\begin{rem}
The proof (pp.~344--346) in Ciarlet \cite{Ciarlet} is for the
interpolant and is local, i.e., \eqref{thmex3.1} holds in $L^p$-based norms
on $u$ on $K$ rather than $\omega_K$.
It involves embedding the HCT element into an affine equivalent
family with attention paid to the location of the point $\ba_K$. 
The treatment is quite intricate taking 3 pages to complete.

Another point worth mentioning would arise in applying the techniques of \cite{EG2017} to the HCT element.
Their general assumption is the inclusion $\hat \cP \subset W^{k+1,\infty}(\hat K^, R^q)$ with $k=3$ in the case at
hand. However this inclusion does not hold for the HCT element.

\end{rem}
\vskip 6pt

The subject of prescribing boundary conditions on the space $V_h$ is
not considered here in any detail. Suffice it to say that there are some
ideas that appear reasonable and could be explored more fully in a future work.
For one, if $\Omega$ is polygonal, homogeneous boundary condition can be
incorporated relatively easily. More general conditions can be handled by Nitsche's \cite{Nitsche71}
method of using boundary penalty terms. This is an idea that apparently goes back to J.L. Lions. 
Recall that this approach was a precursor of the Discontinuous Galerkin method.
We promptly point out that we are not doing DG here and in some sense the whole approach
is antithetical to it.



\begin{thebibliography}{99}
\bibitem{adams75}
{\sc R.~A. Adams},
{\it Sobolev Spaces},
Academic Press, New York, 1975.

\bibitem{BJK90}
{\sc G.~A. Baker, W.~N. Jureidini, and O.~A. Karakashian},
{\it Piecewise solenoidal vector fields and the Stokes problem},
SIAM J. Numer. Anal., 27 (1990), pp.~1466--1485.

\bibitem{brennerxx}{\sc S. Brenner} Private communication.

\bibitem{BS2008}{\sc S. C. Brenner and L. R. Scott}
{\it The Mathematical theory of Finite Element Methods, third ed.}
Texts in Applied Mathematics, Springer (2008).

\bibitem{Ciarlet} {\sc P. G. Ciarlet} {\it The Finite Element Method for Elliptic Problems},
North Holland, (1978)

\bibitem{CW71}{\sc P. G. Ciarlet, C. Wagschal}
{\it Multipoint Taylor formulas and applications to the finite element method}, Numer. Math., 17,
pp.~84-100.


\bibitem{Clement75}{\sc P. Cl\'{e}ment}
{\it Approximation by finite element functions using local regularization}, RAIRO Anal. Num\'{e}r. 9, (1975)
pp.~77--84.

\bibitem{DL1953}{\sc J. Deny, J. L. Lions}
{\it Les espaces de type Beppo Levi}
Ann. Institut Fourier, (Grenoble) (1953-1954), pp.~305--370.

\bibitem{DDPS79}{\sc J. Douglas Jr., T. Dupont, T. Percell, L.R. Scott}, {\it A family of $C^1$ finite elements
with optimal approximation properties for various Galerkin methods for 2nd and 4th order problems},
RAIRO-Anal. Num\' {e}r. {\bf 13} (3) (1979), pp.~227--255.

\bibitem{DS80}{\sc T. Dupont, L.R. Scott}
{\it Polynomial approximation of functions in Sobolev spaces}, Math. Comp., 34, (1980), pp.~441--463.

\bibitem{Duran}{\sc R. G. Duran}, {\it On polynomial Approximation in Sobolev Spaces}, SIAM J. Numer. Anal.
Vol. 20, No. 5, October 1983, pp. 985--988.

\bibitem{EG2017}{\sc A. Ern, J.-L. Guermond}, {\it Finite element quasi-interpolation and best approximation},
ESAIM: Mathematical Modeling and Numerical Analysis, (2017).

\bibitem{GHV2009}{\sc E. Georgoulis, P. Houston}, {\it Discontinuous Galerkin methods for the biharmonic
problem}, IMA J. Numer. Anal., {\bf 29} (3), (2009), pp.~573--594.

\bibitem{KJ98}{\sc O. Karakashian, W. N. Jureidini}, {\it A nonconforming Finite Element Method for
the Stationary Navier-Stokes Equations}, SIAM J. Numer. Anal., 35, No. 1, (1998), pp.~93--120.

\bibitem{KP03}{\sc O. Karakashian, F. Pascal}, {\it A posteriori error estimates for a discontinuous Galerkin
approximation of second-order elliptic problems}, SIAM J. Numer. Anal., 41, No. 6, (2003), pp.~2374--2399.

\bibitem{KC2018}{\sc O. Karakashian, C. Collins}, {\it Two-level additive Schwarz methods for discontinuous
Galerkin approximations of the biharmonic equation}, Jour. of Sci. Comput., {\bf 74}, (1),  (2018), pp.~573--604.

\bibitem{Nitsche71}{\sc J.A. Nitsche}, {\it \"{U}ber ein Variationprinzip zur L\"{o}sung von Dirichlet-problemen
bei Verwendung von Teilr\"{a}umen, die keinen Randbedingungen unterworfen sind,}
Abh. Math. Sem. Univ. Hamburg {\bf 36}, 9-15.

\bibitem{oswald93}{\sc P. Oswald}
{\it On a BPX-preconditioner for P1 elements}, Computing, 51(2), (1993), pp.~125--133.

\bibitem{SZ90}{\sc L.R. Scott, S. Zhang}
{\it Finite element interpolation of non-smooth functions satisfying boundary conditions}, Math. Comp., 54,
(1990), pp.~483--493.

\bibitem{Stein}{\sc E. Stein}
{\it Singular integrals and differentiability properties of functions}, Princeton University Press, Princeton, 
New Jersey, (1970).

\bibitem{Zenisek73}{\sc A. \v{Z}eni\v{s}ek}, {\it Polynomial approximation on tetrahedrons in the finite
element method}, J. Approximation Theory 7, (1973) pp.~334--351.

\bibitem{Zenisek74}{\sc A. \v{Z}eni\v{s}ek}, {\it A general theorem on triangular finite $C^{(m)}$ elements},
Rev. Fran\c{c}aise Automaat. Informat. Recherche Op\'{e}rationelle S\'{e}r. Rouge Anaal. Num\'{e}r.,
{\bf R}-2, (1974), pp.~119--127.







\end{thebibliography}
\end{document}